\documentclass[12pt,reqno,oneside]{amsart}
      
\usepackage{amsmath,amsthm,amsfonts,amssymb}
\usepackage{indentfirst}
\usepackage{url}
\usepackage{hyperref}
\usepackage{graphicx}
\usepackage[usenames]{color}

\usepackage{pgfplots} 

\theoremstyle{plain}
\newtheorem{theorem}{Theorem}[section]
\newtheorem{cor}[theorem]{Corollary}

\theoremstyle{definition}

\newtheorem{exa}[theorem]{Example}
\newtheorem{obs}[theorem]{Remark}

\numberwithin{equation}{section}

\newcommand{\Cind}{C_{\mathrm{ind}}}

\begin{document}

\baselineskip=18pt

\title[]{Immigration Processes with Binomial Catastrophes and Random Survival Parameters}

\author[Sandro Gallo]{Sandro Gallo}
\address[Sandro Gallo]{Departamento de Estat\'istica, Universidade Federal de S\~ao Carlos,  Brazil.}
\email{sandro.gallo@ufscar.br}

\author[Alejandro Rold\'an-Correa]{Alejandro~Rold\'an-Correa}
\address[Alejandro Rold\'an]{Instituto de Matem\'aticas, Universidad de Antioquia, Colombia}
\email{alejandro.roldan@udea.edu.co}

\thanks{Sandro Gallo was supported by Fapesp (23/13453-5)  and Alejandro Roldan by Universidad de Antioquia (Project No. 2025-80410).}

\keywords{Markov process, Firework process, catastrophes, population dynamics}
\subjclass[2010]{60J27, 60J28, 92D25}
\date{\today}

\begin{abstract}
We consider immigration processes with binomial catastrophes and random survival parameters. Two sources of randomness are analyzed. In the first model, the survival parameter is independently resampled at each catastrophe. In the second model, individuals are assigned independent survival parameters at birth, which remain fixed over time. We show that the first model exhibits almost sure extinction, as in the classical case with a fixed survival parameter. In contrast, the second model exhibits a phase transition, admitting survival with positive probability, depending on the distribution of individual survival parameters. We provide explicit formulas for both the survival probability and the expected time to extinction. Finally, our proofs establish a novel methodological bridge with the Firework process, unifying population dynamics with spatial models of information spreading.
\end{abstract}

\maketitle

\section{Introduction}
\label{S: Introduction}

A widely used model in population dynamics is the immigration process with binomial catastrophes (hereafter abbreviated as IPBC), in which each individual present at the time of a catastrophe survives independently with a given probability. Such models have been investigated in various probabilistic settings and exhibit robust extinction behavior; see \cite{AEL2007, economou2004, JMR2016} and the references therein. However, populations exposed to disasters, such as those arising in metapopulation models or epidemic dynamics, often display long-term persistence. This raises the question of what mechanisms can counterbalance catastrophic effects; see \cite{DJMR2023, JMR2016, MRS2015}.

For the IPBC with a fixed survival parameter strictly smaller than one, extinction occurs with probability one, independently of the immigration rate. This result, established in several works including Artalejo et al. \cite{AEL2007}, shows that the cumulative effect of recurrent and equally severe catastrophes cannot be balanced by immigration in a population with deterministic fitness. 

In this paper, we investigate two natural extensions of the classical model obtained by introducing randomness into the survival mechanism. The two models differ in the level at which randomness is incorporated. In the first model, the survival parameter associated with each catastrophe is independently sampled from a probability distribution supported on the unit interval. This formulation is still in the realm of binomial catastrophes, randomly choosing the severity of catastrophic events over time. In the second model, randomness is assigned at the individual level: each individual is endowed, at the time of its creation, with a {survival parameter} drawn independently from a given distribution, which remains fixed throughout all subsequent catastrophes. This is no longer a binomial catastrophe model; however, each individual is still independently subject to catastrophic events that affect them differently, depending on their respective fitness.

Interestingly, these two sources of randomness lead to qualitatively different long-term behaviors. For the catastrophe-random model, we show that extinction still occurs with probability one, extending the classical result for fixed survival parameters. In contrast, the individual-random model exhibits a phase transition, admitting survival with positive probability in certain cases. For both models, our main results provide explicit formulas for the expected time to extinction, alongside the exact survival probability for the individual-random case. For the individual-random model, our criteria reveal an interesting fact: while ultimate survival depends on both the immigration rate and the distribution of survival parameters, the condition under which the expected number of catastrophes is infinite does not depend on the immigration rate.

A possible interpretation of the above results, from the ecological perspective, would be that populations with diverse and permanent survival traits are more resilient to recurrent catastrophes than populations subject to population-wide fluctuations in disaster severity.

Beyond the above results, this work also contributes to establishing a formal bridge between general immigration processes with catastrophes and spatial models of information spreading. Specifically, we introduce a spatial interpretation of the IPBC, viewing it as a model for information propagation on $\mathbb{N}$, known in the literature as the \emph{Firework process} \cite{GGJR2014, JMZ2011} (shortened to FP in the sequel). This connection unifies two previously distinct bodies of literature, allowing results from spatial rumor processes to be applied to population dynamics problems. As a byproduct, we derive an explicit formula for the expected final range of the rumor, which translates directly into the expected time to extinction for the population model.

A few words regarding our proofs. For the catastrophe-random model, we rely on the classical Foster criterion to establish ergodicity, while the explicit formula for the expected time to extinction is derived via the Poisson transform and boundary analysis of the Kolmogorov equations. For the {individual-random} model, the proof of our main theorem exploits the aforementioned bridge with the Firework process.

The paper is organized as follows. Section \ref{sec:results} contains the definition of the IPBC and the main results about it. We define the FP in Section \ref{sec:bridge} and explain how it relates to the IPBC, and state some consequences of this relation. Finally, we prove our results in Section \ref{sec:proofs}.

\section{Models and Main Results}\label{sec:results}

\subsection{Immigration process with binomial catastrophes (IPBC)} 
We consider a population of individuals living in a single colony and subject to binomial catastrophes. The colony produces new individuals according to a Poisson process with rate $\lambda > 0$, while catastrophes occur independently according to a Poisson process with rate 1. Specifically, the population size at time $t$ is modeled by a continuous-time Markov process $\{X(t): t\ge0\}$ with initial condition $X(0)=1$. We denote this process by $C(\lambda,p)$. At each catastrophe, individuals survive independently with a fixed probability (survival parameter) $p\in(0,1)$.

The infinitesimal generator of $X(t)$ is given by
\[
q_{ij} =
\begin{cases}
\lambda, & j = i + 1, \\
\binom{i}{j} p^{j}(1-p)^{i-j}, & j = 0,\dots,i-1, \\
-(\lambda + 1), & j = i, \\
0, & \text{otherwise}.
\end{cases}
\]

We say that the process $C(\lambda,p)$ \emph{survives} if
\begin{equation}\label{def_sobrevivencia}
\mathbb P(\mathcal A_C) :=\mathbb P\big(X(t)>0 \text{ for all } t>0\big)>0.    
\end{equation}
Otherwise, the process is said to \emph{die out}. In either case, let $M_C$ and $T_C$ denote the total number of catastrophes and the time elapsed until extinction, respectively.

\begin{obs}\label{E(T)=E(M)} We can write $T_C=\sum_{i=1}^{M_C} T_i$, where the random variables $T_i$ are independent and identically distributed (i.i.d.) with an $\text{Exp}(1)$ distribution. Here, $T_1$ represents the time of occurrence of the first catastrophe, and $T_i$ is the inter-arrival time between the $(i-1)^{\text{th}}$ and $i^{\text{th}}$ catastrophes for $i=2,\ldots, M_C$. Since the variables $T_i$ are i.i.d. with $\mathbb{E}(T_i)=1$, and the event $\{M_C \ge i\}$ depends only on the history of the process prior to the $i^{\text{th}}$ catastrophe (making it independent of $T_i$), it follows from Wald's identity that $\mathbb{E}(T_C) = \mathbb{E}(M_C).$ 
\end{obs}

It is a known result \cite{AEL2007} that the process $C(\lambda,p)$ dies out for all $\lambda>0$ and $p \in (0,1)$. Furthermore, the expected time to extinction is given by
\begin{equation}\label{Tiemp_Medio_Artalejo}
    \mathbb{E}(T_C) = \frac{1}{\lambda} \left( \prod_{k=0}^{\infty} \left(1 + \lambda p^k \right) - 1 \right) < \infty.
\end{equation}

In particular, this shows that the immigration mechanism is insufficient to counterbalance the effect of recurrent binomial catastrophes when the survival parameter is fixed.

\subsection{Main results: How to help the population survive?}
There is a shared interest in probability theory and population dynamics in the phenomenon of phase transition: models in which an abrupt change of macroscopic behavior is observed when a parameter changes at the microscopic level. In this respect, the IBPC is not very interesting — no matter how close to 1 the survival parameter $p$ is, the population dies out. A natural question is therefore: what does it take for the population to survive?
To answer this, we explore two independent approaches to introducing randomness into the survival mechanism.

\subsubsection{The catastrophe-random model}
Our first approach is to randomly draw the survival parameter at each catastrophe, with the hope that the random occurrence of less severe catastrophes will help the population persist. In this variant, the random survival parameter, denoted $X$, is therefore tied to the catastrophic events themselves: each time a catastrophe occurs, $X$ is sampled independently from a probability distribution $\nu$ supported on $[0,1]$, and, conditional on $X=p$, individuals survive independently according to a binomial thinning with parameter~$p$.

We denote this process by $C_{\mathrm{cat}}(\lambda,\nu)$ and refer to it as the \emph{catastrophe-random model}.

\begin{theorem}\label{Th:Catastrofe_aleatoria}
Consider the process $C_{\mathrm{cat}}(\lambda,\nu)$ with $\lambda>0$, and suppose that the probability distribution $\nu$ on $[0,1]$ is not concentrated at $1$. Then:
\begin{enumerate}
    \item The process dies out almost surely.
    \item The expected number of catastrophes until extinction (which equals the expected time to extinction, see Remark~\ref{E(T)=E(M)}) is finite and given by
    \begin{equation}\label{eq:exp_cat_random}
    \mathbb E(M_{C_{\mathrm{cat}}}) = -\frac{1}{\lambda} \frac{\mathcal{S}_\nu(\lambda)}{1 + \mathcal{S}_\nu(\lambda)} < \infty,
    \end{equation}
    where $\mathcal{S}_\nu(\lambda)$ is the analytic continuation of the alternating series
    \begin{equation}\label{Eq:aux}
    \mathcal{S}_\nu(\lambda) = \sum_{n=1}^\infty \frac{(-\lambda)^n}{\prod_{j=1}^n (1-\mathbb{E}[X^j])},
    \end{equation}
    with $X \sim \nu$.
\end{enumerate}
\end{theorem}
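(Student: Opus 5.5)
We prove (1) by a Foster-type drift argument on the embedded chain observed at catastrophe epochs, and (2) via the Poisson transform of the Kolmogorov equations with a boundary analysis.

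\textbf{Part (1).} Let $Y_n$ denote the population size just after the $n$-th catastrophe. Between two catastrophes the number of immigrants is geometric: $\mathbb P(G=k)=\frac{1}{1+\lambda}\left(\frac{\lambda}{1+\lambda}\right)^k$, $k\ge0$, with mean $\lambda$. Hence, given $Y_{n}=i$,
\[
Y_{n+1}\sim \mathrm{Bin}(i+G,X),\qquad X\sim\nu,
\]
independently of the past. Write $m_1=\mathbb E[X]<1$; this uses that $\nu$ is not concentrated at $1$. Then
\[
\mathbb E[Y_{n+1}\mid Y_n=i]=m_1(i+\lambda).
\]
Take the Lyapunov function $V(i)=i$. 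For $i>i_0:=\lceil (2m_1\lambda)/(1-m_1)\rceil$ the drift is at most $-(1-m_1)i/2<-\varepsilon$, and the drift is bounded on the finite set $\{0,\dots,i_0\}$. The chain is irreducible on the states reachable from $1$, and state $0$ is reached from every state with positive probability. Indeed, $\nu([0,1))>0$ gives a positive probability of $G=0$ together with all $i$ individuals dying, since $\mathbb E[(1-X)^i]>0$. Foster's criterion then gives positive recurrence. Consequently $0$ is hit almost surely, with $\mathbb E(M_{C_{\mathrm{cat}}})<\infty$, which also yields the finiteness claim in (2). Extinction of the continuous-time process follows, since the population is $0$ at that catastrophe epoch.

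\textbf{Part (2), transform.} To get the formula, let $m_i$ be the expected number of catastrophes until absorption at $0$ starting from $i$ individuals, with $m_0=0$. First-step analysis in continuous time, conditioning on whether an immigration or a catastrophe comes first, gives for $i\ge1$
\[
(\lambda+1)m_i=\lambda m_{i+1}+1+\sum_{j=0}^{i}\mathbb E\Big[\binom ij X^j(1-X)^{i-j}\Big]m_j .
\]
Introduce the Poisson transform $\phi(z)=e^{-z}\sum_i m_i z^i/i!$. Binomial thinning acts on Poisson transforms by $z\mapsto Xz$, since a Poisson$(z)$ number thinned by $p$ is Poisson$(pz)$. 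The recursion therefore becomes a functional-differential equation of the form
\[
\lambda\phi'(z)=\phi(z)-\mathbb E[\phi(Xz)]-1+(\text{a boundary correction at }i=0).
\]
The correction arises because the equation at $i=0$ is not the absorbing one; we handle it by treating the unknown $c=m_1$, or equivalently $\phi'(0)$, as a free constant.

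\textbf{Part (2), series and boundary.} We then expand $\phi(z)=\sum_n a_n z^n$. Matching coefficients gives
\[
\lambda(n+1)a_{n+1}=(1-\mathbb E[X^n])a_n+\dots,
\]
so the coefficients are products of $(1-\mathbb E[X^j])$ divided by powers of $\lambda$. Passing to the dual form, that is, working with the generating function in $1/\lambda$ or equivalently with the coefficients $b_n$ defined through the product, produces the series $\mathcal S_\nu(\lambda)$. The constant $c$ is fixed by a boundary condition at infinity. We expect $m_i$ to grow at most logarithmically in $i$: a Foster bound gives that $m_i$ is sublinear. This growth restriction forces $\phi$ to have subexponential growth, which pins down $c$ and yields $m_1=\mathbb E(M_{C_{\mathrm{cat}}})=-\frac1\lambda\frac{\mathcal S_\nu}{1+\mathcal S_\nu}$. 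Where the alternating series diverges, we justify the formula by analytic continuation in $\lambda$: both sides are analytic for small $\lambda$, and $m_1$ is analytic in $\lambda$ on $(0,\infty)$.

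As a sanity check, when $\nu=\delta_p$ one has $\mathcal S=\sum(-\lambda)^n/(p;p)_n$. By Euler's identity this equals $\prod_k(1+\lambda p^k)^{-1}-1$, which recovers \eqref{Tiemp_Medio_Artalejo}.

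The main obstacle is this boundary/uniqueness step: selecting the correct solution of the coefficient recursion, and justifying the analytic continuation for large $\lambda$ where the alternating series need not converge.
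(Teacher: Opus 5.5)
Your proposal follows the paper's architecture. Part (1) is the same Foster argument with a linear Lyapunov function; you use the chain at catastrophe epochs, the paper the full jump chain with $g(i)=i+1$. Part (2) uses the same Poisson transform, functional-differential equation, first-order coefficient recursion, boundary condition at infinity, and analytic continuation in $\lambda$.

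The one substantive difference is how the physical solution is selected. The paper shows that $S_\infty<0$ would make some $\tau_i$ negative, then invokes minimality of the nonnegative solution to force $S_\infty=0$. That tacitly assumes the solution with $S_\infty=0$ is itself nonnegative. You instead use the a priori bound on $m_i$ from the drift in Part (1). This rules out both signs of $S_\infty$ at once and needs no minimality.

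The idea is sound, but "pins down $c$" should be spelled out. With $P_k=\prod_{j\le k}(1-\mathbb E[X^j])$ and $S_k=c+\frac{1+\lambda c}{\lambda}\sum_{n=1}^k\frac{(-\lambda)^n}{P_n}$, one has $c_{k+1}=\frac{P_k}{\lambda^k(k+1)!}S_k$. For $\lambda$ below the radius of convergence of the alternating series, the part of $\phi$ proportional to $S_\infty$ grows faster than $e^{z}$ as $z\to+\infty$, while the remainder is $O(e^{z})$. Since $\phi(z)=\mathbb E[m_{N_z}]=O(1+z)$ with $N_z\sim\mathrm{Poisson}(z)$, this forces $S_\infty=0$.

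Two smaller points on this step. The comparison theorem with $V(i)=i$ gives $m_i=O(i)$, not a sublinear bound, but linear is all you need. The paper's bound $\tau_i<KC^i$ with $C=(1+\lambda)/\lambda$ would be too weak for this argument, so your drift bound is doing real work here.

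Finally, both you and the paper base the extension to large $\lambda$ on real-analyticity of $\lambda\mapsto m_1(\lambda)$. You simply assert it, and the paper justifies it only loosely via Kato. With your catastrophe-epoch chain it can be made concrete. Kac's formula gives $m_1=\frac{1}{\lambda}\bigl(\frac{1+\lambda}{\pi_\lambda(0)}-1\bigr)$, where $\pi_\lambda(0)=\mathbb E\prod_{k\ge1}(1+\lambda X_1\cdots X_k)^{-1}$ with $X_k$ i.i.d.\ with law $\nu$. This expression is analytic for $\mathrm{Re}\,\lambda>0$.
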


A detailed discussion regarding the exact domain of convergence of the series \eqref{Eq:aux} and its analytic continuation is deferred to Remark \ref{Rem:AnalyticContinuation} in Section \ref{sec:proofs}. 
If we evaluate \eqref{Eq:aux} for the classical case of a fixed survival parameter (i.e., $\nu = \delta_p$ for some $p \in (0,1)$), the term in the denominator becomes $\prod_{j=1}^n (1-p^j)$. By applying Euler's classical identity for $q$-hypergeometric series \cite[Eq. 1.3.15]{GR2004}, the algebraic fraction $\frac{-\mathcal{S}_{\delta_p}(\lambda)}{1+\mathcal{S}_{\delta_p}(\lambda)}$ analytically simplifies to
\[
\prod_{k=0}^\infty (1+\lambda p^k) - 1.
\]
Substituting this back into \eqref{eq:exp_cat_random} recovers exactly formula \eqref{Tiemp_Medio_Artalejo}.

\begin{exa}[Power-function distribution]\label{Ex:PowerFunction}
When the survival parameter follows a power-function distribution on $(0,1)$, display \eqref{eq:exp_cat_random} admits a simple form in terms of the parameters. A random variable $X$ has a power-function distribution if its probability density function is given by $f(x) = ax^{a-1}$ for $x\in(0,1)$, where $a>0$ is a parameter. In this case, the moments are $\mathbb{E}[X^j] = \frac{a}{a+j}$, which implies
$$1 - \mathbb{E}[X^j] = \frac{j}{a+j}.$$
The product in the denominator simplifies
$$\prod_{j=1}^n (1-\mathbb{E}[X^j]) = \prod_{j=1}^n \frac{j}{a+j} = \frac{n!}{(a+1)_n},$$
where $(a+1)_n = (a+1)(a+2)\dots(a+n)$ is the Pochhammer symbol. Substituting this into the alternating series yields
$$\mathcal{S}_\nu(\lambda) = \sum_{n=1}^\infty \frac{(a+1)_n}{n!} (-\lambda)^n.$$
This is precisely the binomial series expansion for $(1+\lambda)^{-(a+1)}$ minus the $n=0$ term. Although this series representation converges strictly for $\lambda < 1$, its analytic continuation extends to all $\lambda > 0$ (as formally justified in Remark \ref{Rem:AnalyticContinuation}), yielding the well-defined algebraic function
$$\mathcal{S}_\nu(\lambda) = (1+\lambda)^{-(a+1)} - 1.$$
Plugging this result back into \eqref{eq:exp_cat_random}, we find a striking closed-form expression for the expected number of catastrophes:
$$\mathbb E(M_{C_{\mathrm{cat}}}) = \frac{(1+\lambda)^{a+1} - 1}{\lambda}.$$
The behavior of this expected time as a function of the immigration rate $\lambda$ is illustrated in Figure \ref{fig:ExpectedCatastrophes} for various values of the parameter $a$.
\end{exa}

\begin{figure}[htpb]
    \centering
    \begin{tikzpicture}
        \begin{axis}[
            width=10cm,
            height=7cm,
            xlabel={Immigration rate ($\lambda$)},
            ylabel={$\mathbb{E}(M_{C_{\mathrm{cat}}})$},
            xmin=0, xmax=5,
            ymin=0, ymax=20,
            domain=0.01:5,
            samples=100,
            grid=major,
            legend pos=north west,
            legend cell align={left},
            thick
        ]
        \addplot[color=red] {((1+x)^3 - 1)/x};
        \addlegendentry{$a=2$ }
        \addplot[color=blue, densely dashed] {x + 2};
        \addlegendentry{$a=1$ }
        \addplot[color=black!60!green, dotted, very thick] {((1+x)^1.5 - 1)/x};
        \addlegendentry{$a=0.5$ }      
        \end{axis}
    \end{tikzpicture}
    \caption{Expected number of catastrophes until extinction for the $C_{\mathrm{cat}}$ model as a function of the immigration rate $\lambda$, assuming the survival parameter follows a power-function distribution with parameter $a$. The chart illustrates the linear growth $\lambda + 2$ for the uniform case ($a=1$), contrasting with the polynomial growth for $a=2$ and the sublinear behavior for $a=0.5$.}\label{fig:ExpectedCatastrophes}
\end{figure}

As shown by Theorem~\ref{Th:Catastrofe_aleatoria} and Example~\ref{Ex:PowerFunction}, introducing randomness at the macroscopic level of the catastrophic events only alters the expected time to extinction, but fails to prevent the ultimate collapse of the population. 

We now explore an alternative survival mechanism.

\subsubsection{The individual-random model}
Motivated by the recent work \cite{CM2025}, we define a second variant in which randomness is introduced at the microscopic level of the individuals. Upon creation, each individual is independently assigned a survival parameter denoted by $X$ drawn from a probability distribution $\nu$ on $[0,1]$. This survival probability remains fixed for that individual throughout all subsequent catastrophes. At each catastrophe, individuals survive independently according to their own assigned survival parameters.

We denote this process by $\Cind(\lambda,\nu)$ and refer to it as the \emph{individual-random model}.

\begin{theorem}\label{Th:Particula_aleatoria}
The survival probability of the process $\Cind(\lambda,\nu)$ is given by
\begin{equation}\label{eq:prob_surv}
\mathbb P(\mathcal A_{\Cind})
=
\frac{1+\lambda}
{\lambda+\lambda\sum_{j\ge1}
\prod_{k=0}^{j-1}
\frac{1}{1+\lambda\,\mathbb E(X^{k+1})}},
\end{equation}
where $X\sim\nu$. (The probability is $0$ if the series of the denominator diverges). Moreover, the expected number of catastrophes until extinction (which equals the expected time to extinction, see Remark~\ref{E(T)=E(M)}) is given by
\begin{equation}\label{eq:exp_number}
\mathbb E(M_{\Cind})=\frac{1}{\lambda}\left((1+\lambda)\prod_{k\ge0}\left(1+\lambda\,\mathbb E(X^{k+1})\right)-1\right).
\end{equation}
\end{theorem}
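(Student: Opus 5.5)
The plan is to discretize time at the catastrophe epochs and recast $\Cind(\lambda,\nu)$ as a Firework process on $\mathbb N$ with independent radii. Two facts drive this. First, the population can only hit $0$ at a catastrophe. Second, between consecutive catastrophes the number of immigrants $N$ satisfies $\mathbb P(N=k)=\frac{1}{1+\lambda}\left(\frac{\lambda}{1+\lambda}\right)^k$, $k\ge0$.

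Call the \emph{$j$-th group} the individuals born between catastrophes $j-1$ and $j$; group $1$ also contains the initial individual. Let $R_j$ be the largest number of catastrophes survived by a member of group $j$, with $R_j=0$ if the group is empty or everyone dies at catastrophe $j$. The $R_j$ are independent, because immigration counts in disjoint epochs, survival parameters and thinnings are all independent. Write $a_m=\mathbb E(X^m)$. An individual survives $m$ given catastrophes with probability $a_m$, so for $j\ge2$,
\[
\mathbb P(R_j<m)=\mathbb E\big[(1-a_m)^N\big]=\frac{1}{1+\lambda a_m},
\]
and for the first group (which has $1+N$ members) $\mathbb P(R_1<m)=\frac{1-a_m}{1+\lambda a_m}$.

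The population is still positive after catastrophe $n$ iff some $j\le n$ has $R_j\ge n-j+1$. Hence extinction happens exactly at the first $n$ for which the event $E_n=\{R_j<n-j+1\ \forall j\le n\}$ occurs, so $M_{\Cind}=\inf\{n:E_n\}$. This is precisely the Firework picture: station $j$ emits with radius $R_j$, and the rumor first fails to reach site $n+1$.

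By independence,
\[
\mathbb P(E_n)=\frac{1-a_n}{1+\lambda a_n}\,G_{n-1},\qquad G_k:=\prod_{i=1}^{k}\frac{1}{1+\lambda a_i},\quad G_0=1.
\]
The key renewal step decomposes $E_n$ according to the first failure $m=M\le n$. On $\{M=m\}$ the constraints for $j\le m$ are automatic, and the groups $j=m+1,\dots,n$ act as an empty-start system that is independent of $\{M=m\}$. This gives $\mathbb P(E_n)=\sum_{m=1}^n\mathbb P(M=m)\,G_{n-m}$.

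Now introduce the generating functions $F(z)=\sum_m \mathbb P(M=m)z^m$ and $\Gamma(z)=\sum_k G_kz^k$. The identity $\frac{1-a_n}{1+\lambda a_n}=\frac{1}{\lambda}\big((1+\lambda)\frac{G_n}{G_{n-1}}-1\big)$ yields $\sum_n\mathbb P(E_n)z^n=\frac1\lambda\big((1+\lambda)(\Gamma(z)-1)-z\Gamma(z)\big)$. Combining this with the convolution identity gives
\[
1-F(z)=\frac1\lambda\Big(\frac{1+\lambda}{\Gamma(z)}-(1-z)\Big),\qquad 0\le z<1.
\]

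Both claims of Theorem~\ref{Th:Particula_aleatoria} then follow by letting $z\uparrow1$, using monotone convergence and Abel's theorem. For the survival probability, $\mathbb P(\mathcal A_{\Cind})=\lim_{z\uparrow1}(1-F(z))=\frac{1+\lambda}{\lambda\Gamma(1)}$ with $\Gamma(1)=1+\sum_{j\ge1}\prod_{k=0}^{j-1}(1+\lambda a_{k+1})^{-1}$. This is \eqref{eq:prob_surv}, and it equals $0$ when $\Gamma(1)=\infty$. For the expectation, $\mathbb E(M_{\Cind})=\lim_{z\uparrow1}\frac{1-F(z)}{1-z}$, which is valid in $[0,\infty]$ since $\frac{1-F(z)}{1-z}=\sum_n\mathbb P(M>n)z^n$. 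Abel's theorem gives $(1-z)\Gamma(z)\to\lim_k G_k=\prod_{k\ge0}(1+\lambda a_{k+1})^{-1}$, and this yields \eqref{eq:exp_number}, with value $+\infty$ when the product diverges.

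I expect the main obstacle to be the renewal decomposition: one has to argue carefully that, conditionally on $\{M=m\}$, the later groups are unaffected and the earlier constraints are automatically satisfied for all $n\ge m$. The Abelian limits when $\Gamma(1)=\infty$ or the product diverges also need care, in particular that $(1-z)\Gamma(z)\to0$ gives $\mathbb E(M)=\infty$ with no spurious cancellation.
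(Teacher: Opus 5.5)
Your proof is correct. It rests on the same Firework picture as the paper: the groups born between catastrophes play the role of sites with $\mathrm{Geo}(1/(1+\lambda))$ occupancy, and the effective radius law is $\mathbb P(R<m)=(1+\lambda a_m)^{-1}$ with $a_m=\mathbb E(X^m)$. From there the derivation is genuinely different.

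The paper derives nothing from scratch. It imports Theorem~\ref{Th:Gallo} for a Firework process in which every site, including site $0$, carries a geometric number of individuals. Item (1) comes from \cite{GGJR2014}, and item (2) from the renewal sequence $u_n$ together with Feller's theorem. It then accounts for the extra initial individual through the memoryless property, $\mathbb P(\mathcal A_C)=\mathbb P(\mathcal A_F\mid N_0\ge1)$, which yields the conversions \eqref{Relacion_FP} and \eqref{Relacion_FPExpect}.

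You instead build the $1+N$ first group directly into $\mathbb P(R_1<m)=\frac{1-a_m}{1+\lambda a_m}$. You get a renewal equation by decomposing the cut events $E_n$ at the first failure. Your independence argument is exactly what is needed: $\{M=m\}\in\sigma(R_1,\dots,R_m)$, and the constraints for $j\le m$ hold automatically on that event. You then read off both formulas from the single identity $1-F(z)=\lambda^{-1}\big((1+\lambda)/\Gamma(z)-(1-z)\big)$, using monotone convergence and Abel's theorem. The degenerate cases $\Gamma(1)=\infty$ and $G_k\to0$ are handled correctly.

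Your route is self-contained. It also makes the identification $M_{\Cind}=\inf\{n:E_n\}$ explicit, rather than resting on the informal ``move to the right'' description. The paper's route is shorter given the cited results, and it foregrounds the general Firework theorem, which is the bridge the authors want to advertise. Note that running your decomposition with i.i.d. radii at all sites gives $1-F(z)=1/\Gamma(z)$, and hence reproves Theorem~\ref{Th:Gallo} itself.
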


In contrast with $C_{\mathrm{cat}}(\lambda,\nu)$, $C_{\mathrm{ind}}(\lambda,\nu)$ achieves the intended effect, as it exhibits a phase transition phenomenon. This behavior is clearly illustrated in the explicit examples below: by tuning an appropriate parameter, \eqref{eq:prob_surv} can be zero or strictly positive.
Note that in the case where $\nu=\delta_p$ for some $p\in[0,1]$, then we have $\mathbb E(X^{k+1})=p^{k+1}$ and thus our expression \eqref{eq:exp_number} for the expected number of catastrophes corresponds to formula~\eqref{Tiemp_Medio_Artalejo} 

Just as for \eqref{eq:exp_cat_random}, expressions \eqref{eq:prob_surv} and \eqref{eq:exp_number} are hardly computable in general. The following corollary provides easily verifiable conditions for survival, extinction and finiteness of the expected number of catastrophes. Explicit examples will be considered later on.

\begin{cor}\label{cor:criteria}
Consider the process $\Cind(\lambda,\nu)$ and let $X\sim \nu$. 
\begin{enumerate}
    \item \textit{General criteria.}
    \begin{itemize}
        \item If $\displaystyle\liminf_{n\to\infty} n\lambda\,\mathbb E(X^{n+1})>1$, then $\mathbb P(\mathcal A_{\Cind})>0$.
        \item If $\displaystyle\limsup_{n\to\infty} n\lambda\,\mathbb E(X^{n+1})<1$, then $\mathbb P(\mathcal A_{\Cind})=0$.
    \end{itemize}
    Furthermore, the expected number of catastrophes is infinite if and only if the sum of the moments diverges; that is,
    \begin{equation*}
        \mathbb E(M_{\Cind})=\infty \quad \text{if and only if} \quad \sum_{n\ge0}\mathbb E(X^{n+1})=\infty.
    \end{equation*}
    \item \textit{Density criteria.} Suppose that $\nu$ admits a bounded density $f$ on $[0,1]$ and that the left limit $f(1^-):=\lim_{x\uparrow 1} f(x)$ exists. Then:
    \begin{itemize}
        \item If $f(1^-)>\frac{1}{\lambda}$, then $\mathbb P(\mathcal A_{\Cind})>0$.
        \item If $f(1^-)<\frac{1}{\lambda}$, then $\mathbb P(\mathcal A_{\Cind})=0$.
    \end{itemize}
    If, in addition, $f$ is left-differentiable at $1$ and $f(1^-)=\frac{1}{\lambda}$, then $\mathbb P(\mathcal A_{\Cind})=0$.    
\end{enumerate}
\end{cor}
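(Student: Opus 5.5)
Write $a_k:=\lambda\,\mathbb E(X^{k+1})$ and $P_j:=\prod_{k=0}^{j-1}(1+a_k)^{-1}$. All three statements follow from Theorem~\ref{Th:Particula_aleatoria}. Formula \eqref{eq:prob_surv} shows that $\mathbb P(\mathcal A_{\Cind})>0$ if and only if $\sum_{j\ge1}P_j<\infty$. Formula \eqref{eq:exp_number} shows that $\mathbb E(M_{\Cind})=\infty$ if and only if $\prod_{k\ge0}(1+a_k)=\infty$. Since $a_k\ge0$, this product diverges if and only if $\sum_k a_k=\infty$, i.e. iff $\sum_n\mathbb E(X^{n+1})=\infty$. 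This settles the equivalence in part (1), and it does not involve $\lambda$.

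\textbf{General criteria.} Apply Raabe's test to the positive sequence $P_j$, for which $P_j/P_{j+1}=1+a_j$. Raabe's quantity is $j(P_j/P_{j+1}-1)=j\,a_j=j\lambda\,\mathbb E(X^{j+1})$. If its $\liminf$ exceeds $1$, the series $\sum_j P_j$ converges and survival has positive probability. If its $\limsup$ is below $1$, then $j(P_j/P_{j+1}-1)\le1$ eventually, so the series diverges and $\mathbb P(\mathcal A_{\Cind})=0$.

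A direct comparison gives the same conclusion. If $a_k\ge c/k$ with $c>1$ for $k\ge k_0$, then $\log P_j\le C-\sum_{k_0}^{j-1}\log(1+c/k)$. Because $\log(1+x)\ge x-x^2/2$ and $\sum 1/k^2<\infty$, this yields $P_j=O(j^{-c})$, which is summable. The reverse inequality, $\log(1+x)\le x$, gives $P_j\ge C'j^{-c}$ when $c<1$, which is not summable.

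\textbf{Density criteria.} Reduce to part (1) by establishing the asymptotics $n\,\mathbb E(X^{n+1})\to f(1^-)$. Write $n\int_0^1x^{n+1}f(x)\,dx$ and split at $1-\delta$:
\begin{itemize}
\item On $[0,1-\delta]$ the integral is at most $n\|f\|_\infty(1-\delta)^{n+1}\to0$.
\item On $[1-\delta,1]$, $f$ lies within $\varepsilon$ of $f(1^-)$, and $n\int_{1-\delta}^1x^{n+1}dx\to1$.
\end{itemize}
Hence $n\lambda\,\mathbb E(X^{n+1})\to\lambda f(1^-)$, and both strict cases follow from part (1).

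\textbf{Critical case.} This is the main obstacle, since Raabe's test is inconclusive when $\lambda f(1^-)=1$. The plan is to use left-differentiability of $f$ at $1$ to obtain a second-order expansion. For $x$ near $1$, write $f(x)=f(1^-)+O(1-x)$. Then
\[
\mathbb E(X^{n+1})=\frac{f(1^-)}{n+2}+O\Big(\int_0^1 x^{n+1}(1-x)\,dx\Big)+O(\theta^n)=\frac{f(1^-)}{n}+O(n^{-2}),
\]
for some $\theta<1$, using $\int_0^1x^{n+1}(1-x)\,dx=O(n^{-2})$. With $\lambda f(1^-)=1$ this gives $a_k=1/k+O(k^{-2})$.

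Then $\log P_j=-\sum_{k<j}\log(1+a_k)=-\log j+O(1)$, because $\log(1+a_k)=1/k+O(k^{-2})$ and the error terms are summable. So $P_j\asymp 1/j$, the series $\sum_j P_j$ diverges, and $\mathbb P(\mathcal A_{\Cind})=0$. This is Gauss's test.

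The one technical point to check is that the $O(1-x)$ bound on $f$ holds uniformly on a left neighbourhood of $1$, with boundedness of $f$ handling the rest of $[0,1]$. Both follow directly from the hypotheses.
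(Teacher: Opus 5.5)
Your proof is correct and follows the paper's route. It uses the infinite-product criterion for $\mathbb E(M_{\Cind})$, Raabe's test on $P_j$ with $j(P_j/P_{j+1}-1)=j\lambda\,\mathbb E(X^{j+1})$, the limit $n\,\mathbb E(X^{n+1})\to f(1^-)$, and the Gauss-type expansion $\lambda\,\mathbb E(X^{k+1})=1/k+O(k^{-2})$ in the critical case. The differences are only cosmetic: you obtain the limit by splitting at $1-\delta$ rather than by the paper's substitution $x=1-t/n$ with dominated convergence, and you carry out the Gauss step explicitly via $\log P_j=-\log j+O(1)$ instead of citing the test.
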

\begin{obs}
Note that, while the positivity of $\mathbb{P}(\mathcal{A}_{\Cind})$ depends on both $\lambda$ and the asymptotic behavior of the moments, the finiteness of $\mathbb{E}(M_{\Cind})$ is independent of $\lambda$ and determined solely by the summability of the moment sequence. 
\end{obs}
Intuitively, it is the behavior of the distribution $\nu$ close to $1$ that determines the fate of the population. This phenomenon is natural and was similarly noticed in \cite{CM2025}, although in the context of a different model. Furthermore, our criteria leave room for interesting scenarios where the population is guaranteed to die out, but the expected number of catastrophes until extinction is infinite. We will explicitly observe this behavior in the upcoming examples. 
In these examples,  we use specific  survival parameter distributions yielding simple expressions for the exact survival probability, allowing to identify critical phase transition thresholds, and to analyze the expected number of catastrophes.

\begin{exa}[Beta distribution]
Consider the process $\Cind(\lambda,\nu)$, where $\nu$ is the $\text{Beta}(a, b)$ distribution with parameters $a, b > 0$. Let $X \sim \text{Beta}(a, b)$. The density of $X$ is given by
\[
f(x) = \frac{x^{a-1}(1-x)^{b-1}}{B(a,b)}, \quad \text{for } x \in (0,1),
\]
where $B(a,b)$ is the Beta function. The $(n+1)$-th moment of $X$ is given by
\begin{equation*}
\mathbb{E}[X^{n+1}] = \frac{B(n+a+1, b)}{B(a, b)} = \frac{\Gamma(n+a+1)\Gamma(a+b)}{\Gamma(n+a+b+1)\Gamma(a)}.
\end{equation*}

According to Wendel's Theorem \cite{W1948}, for any $s \in \mathbb{R}$,
\begin{equation*}
\lim_{x \to \infty} \frac{\Gamma(x+s)}{x^s \Gamma(x)} = 1,
\end{equation*}
which implies that as $n \to \infty$,
\begin{equation*}
\frac{\Gamma(n+a+1)}{\Gamma(n+a+b+1)} \sim n^{-b}.
\end{equation*}
Substituting this asymptotic behavior back into the moment expression yields
\begin{equation*}
\lim_{n \to \infty} n \lambda\,\mathbb{E}[X^{n+1}] = \frac{\lambda \Gamma(a+b)}{\Gamma(a)} \lim_{n \to \infty} n^{1-b} = 
\begin{cases} 
\infty & \text{if } 0 < b < 1, \\ 
a\lambda & \text{if } b = 1, \\ 
0 & \text{if } b > 1.  
\end{cases}
\end{equation*}

\vspace{0.2cm}
\noindent \textit{Expected Number of Catastrophes.}
By the first part of Corollary \ref{cor:criteria}, $\mathbb E(M_{\Cind})<\infty$ if and only if  $b>1$. Although we could not find an explicit closed form for it, substituting the $(k+1)$-th moment into \eqref{eq:exp_number} allows us to exactly express the expected number of catastrophes as the convergent infinite product
\begin{equation*}
\mathbb{E}(M_{\Cind}) = \frac{1}{\lambda}\left( (1+\lambda)\prod_{k=0}^{\infty}\left(1 + \lambda \frac{\Gamma(k+a+1)\Gamma(a+b)}{\Gamma(k+a+b+1)\Gamma(a)}\right) - 1 \right).
\end{equation*}

\vspace{0.2cm}
\noindent \textit{Survival Probability and Phase Transition.}
Let us now focus on the survival probability $\mathbb P(\mathcal A_{\Cind})$. By the first part of Corollary \ref{cor:criteria} (which does not require a bounded density), we easily obtain the survival criteria for $a \neq \frac{1}{\lambda}$. 

The critical case $a = \frac{1}{\lambda}$ can be directly resolved using the second part of the corollary only if the density is bounded on $[0,1]$, which requires $a \ge 1$ (i.e., $\lambda \le 1$). For $a < 1$, the corollary alone is inconclusive at the critical value. We summarize the regime as follows.
\begin{enumerate}
    \item[(a)] If $b \in (0,1)$, then $\mathbb P(\mathcal A_{\Cind}) > 0$.
    \item[(b)] If $b > 1$, then $\mathbb P(\mathcal A_{\Cind}) = 0$.
    \item[(c)] If $b = 1$, then:
    \begin{itemize}
        \item[$\circ$] If $a > \frac{1}{\lambda}$, then $\mathbb P(\mathcal A_{\Cind}) > 0$.
        \item[$\circ$] If $a < \frac{1}{\lambda}$, then $\mathbb P(\mathcal A_{\Cind}) = 0$.
        \item[$\circ$] If $a = \frac{1}{\lambda}$ and $\lambda \le 1$, then $\mathbb P(\mathcal A_{\Cind}) = 0$.
    \end{itemize}
\end{enumerate}

To compute the exact probability when $b=1$ and $a > \frac{1}{\lambda}$, notice that the moments simplify to $\mathbb{E}[X^{k+1}] = \frac{a}{a+k+1}$. Using Theorem \ref{Th:Particula_aleatoria}, we evaluate the infinite sum in the denominator:
\begin{equation*}
\sum_{j=1}^{\infty} \prod_{k=0}^{j-1} \frac{a+k+1}{a(1+\lambda)+k+1} = \sum_{j=1}^{\infty} \prod_{k=0}^{j-1} \frac{\alpha+k}{\gamma+k} = {}_2F_1(\alpha, 1; \gamma; 1) - 1,
\end{equation*}
where we used the notation $\alpha = a+1$ and $\gamma = a(1+\lambda)+1$, and ${}_2F_1$ is the Gaussian hypergeometric function. As long as $\gamma > \alpha + 1$ (which corresponds exactly to $a > \frac{1}{\lambda}$, as we are assuming here), ${}_2F_1(\alpha, 1; \gamma; 1)$ simplifies to $\frac{\gamma-1}{\gamma-\alpha-1}$ (see, e.g., \cite[Eq.~15.1.20]{AS1964}). Substituting this value back into the probability formula yields
\begin{equation*}
    \mathbb{P}(\mathcal{A}_{\Cind}) = 1 - \frac{1}{a\lambda}.
\end{equation*}

\vspace{0.2cm}
\noindent \textit{The Critical Threshold.}
Finally, we can fully resolve the critical case $b=1$ and $a = \frac{1}{\lambda}$ for all $\lambda > 0$. Let $h(\lambda) = \mathbb{P}(\mathcal{A}_{\Cind})$ denote the survival probability as a function of the rate $\lambda$. By coupling arguments, one can see that $h(\lambda)$ is non-decreasing in $\lambda$ for all $\lambda > 0$. For a fixed $a > 0$ and $b=1$, we consider the critical rate $\lambda_c = \frac{1}{a}$. By the non-decreasing property of $h$, its value at $\lambda_c$ is bounded above by its right-hand limit,
\begin{equation*}
    0 \le h(\lambda_c) \le \lim_{\lambda \downarrow \lambda_c} h(\lambda).
\end{equation*}
Using our explicit formula for $\lambda > \frac{1}{a}$, we evaluate this limit:
\begin{equation*}
    \lim_{\lambda \downarrow 1/a} \left( 1 - \frac{1}{a\lambda} \right) = 1 - 1 = 0.
\end{equation*}
This immediately forces $h(1/a) = 0$. Therefore, at the critical value, the process goes extinct almost surely ($\mathbb{P}(\mathcal{A}_{\Cind}) = 0$), completely resolving the gap left by the corollary.
\end{exa}

\begin{obs}
The assumption $b=1$ analyzed above corresponds precisely to the \textit{power-function distribution}. For this  case, the model $\Cind(\lambda,\nu)$ has a critical parameter $\lambda_c = \frac{1}{a}$, such that the model survives if and only if $\lambda > \lambda_c$. Moreover, the survival probability is given by
\[
\mathbb{P}(\mathcal{A}_{\Cind}) = 
\begin{cases} 
1 - \frac{1}{a\lambda} & \text{if } \lambda > \lambda_c, \\ 
0 & \text{if } \lambda \le \lambda_c. 
\end{cases}
\]
Setting $a=1$ further reduces this to the \textit{standard uniform distribution}.
\end{obs}

\begin{exa}[Truncated Exponential Distribution]
Consider the process $\Cind(\lambda,\nu)$, where $\nu$ follows a \textit{truncated exponential distribution} on the interval $[0,1]$ with rate parameter $\gamma > 0$. The density of $\nu$ is given by
\[
f(x) = \frac{\gamma e^{-\gamma x}}{1 - e^{-\gamma}}, \quad \text{for } x \in [0,1].
\]

\vspace{0.2cm}
\noindent \textit{Survival Probability and Phase Transition.}
Since $f$ is continuous and strictly bounded on $[0,1]$, and differentiable at $x=1$, we can directly apply the second part of Corollary \ref{cor:criteria}. We first evaluate the left limit of the density at $1$:
\begin{equation*}
f(1^-) = \lim_{x\uparrow 1} \frac{\gamma e^{-\gamma x}}{1 - e^{-\gamma}} = \frac{\gamma}{e^\gamma - 1}.
\end{equation*}
By Corollary \ref{cor:criteria}, the critical threshold is determined by the condition $f(1^-) = \frac{1}{\lambda}$, which yields the critical rate $\lambda_c = \frac{e^\gamma - 1}{\gamma}$. Because $f$ is differentiable at $1$, the corollary guarantees that the process goes extinct almost surely at this critical value ($\lambda = \lambda_c$). Thus, we can summarize the survival criterion with a single equivalence:
\begin{equation*}
\mathbb{P}(\mathcal{A}_{\Cind}) > 0 \quad\text{if and only if}\quad \lambda > \frac{e^\gamma - 1}{\gamma}.
\end{equation*}
This illustrates that for a rapidly decaying density (i.e., when $\gamma$ is large), the critical rate $\lambda_c$ grows exponentially. Consequently, a significantly higher rate $\lambda$ is required for the process to survive.

\vspace{0.2cm}
\noindent \textit{Expected Number of Catastrophes.}
To determine the expected number of catastrophes, we evaluate the sum of the moments $\sum_{n \ge 0} \mathbb{E}[X^{n+1}]$. Notice that the density $f(x)$ of the truncated exponential distribution is strictly decreasing on $[0,1]$. Therefore, it is strictly bounded from below by its minimum value at $x=1$, yielding
\begin{equation*}
f(x) \ge f(1) = \frac{\gamma e^{-\gamma}}{1-e^{-\gamma}} > 0.
\end{equation*}
This allows us to lower bound the moments directly to obtain
\begin{equation*}
\mathbb{E}[X^{n+1}] = \int_0^1 x^{n+1} f(x)\,dx \ge f(1) \int_0^1 x^{n+1}\,dx = \frac{f(1)}{n+2}.
\end{equation*}
Since $f(1)$ is a strictly positive constant, the sum of the moments is bounded below by a harmonic series, which diverges. By the first part of Corollary \ref{cor:criteria}, the divergence of $\sum_{n \ge 0} \mathbb{E}[X^{n+1}]$ unconditionally implies that for any $\gamma > 0$ we have
\begin{equation*}
\mathbb{E}(M_{\Cind}) = \infty.
\end{equation*}
This means that whenever $\lambda \le \lambda_c$, the population is guaranteed to die out ($\mathbb{P}(\mathcal{A}_{\Cind}) = 0$), yet the expected number of catastrophes until extinction is infinite.
\end{exa}

\section{Bridging with the Firework process}\label{sec:bridge}

We believe that an important contribution of the present work is to connect two different bodies of the literature by bridging the so-called Firework Processes (FP) with what we call General Immigration Processes with Catastrophes (GIPC). 

Let us mention in passing that this parallel could be systematically used to import results from each literature that could be novel in the other. For the sake of conciseness, we will only this bridge, in Section \ref{sec:proofs}, to derive the survival probability formula  and the expected time to extinction formula obtained in Theorem~\ref{Th:Particula_aleatoria}. 

This section starts by defining both processes, FP and GIPC, and then shows how they are connected.

\subsection{The Firework process}

Consider a rumor spreading process on $\mathbb{N}=\{0,1,2,\ldots\}$.
At each site $n\in\mathbb{N}$ there is exactly one individual (later on, we will explain how the case with a random number $N_n,n\in\mathbb{N}$ of individuals derives from this simplest case). 
Initially, all individuals are ignorant, except for the one located at site $0$. Let $R\in\mathbb N$ be a random variable interpreted as a radius of information, and associate to each individual $n\ge0$ an independent copy $R_n\sim R$.

The Firework process is defined recursively as follows.
Let $A_n$ denote the set of sites whose individuals have been informed
by stage $n$, so that $A_0=\{0\}$. For $n\ge 1$, we set
\[
A_n=\Big\{ i\in\mathbb{N}:\ \exists\, j\in A_{n-1}\ \text{such that }
i\in\{j,\ldots,j+R_j\}\Big\}\setminus A_{n-1}.
\]

Once informed, individuals become spreaders and remain spreaders forever.
The set $\bigcup_{n\ge 0}A_n$ corresponds to the set of spreaders at the end of the spreading procedure. Define the
final range of the rumor as
\[
M_F:=\Big|\bigcup_{n\ge 0}A_n\Big|.
\]

The Firework process is said to \emph{survive} if the event $\mathcal A_F := \{M_F=\infty\}$ occurs with positive probability. The following result gives an exact expression for this survival probability and for the expected final range of the rumor $\mathbb E(M_F)$.
\begin{theorem}\label{Th:Gallo}
Consider the Firework process on $\mathbb{N}$ as defined above, with $\alpha_k:=\mathbb P(R\le k),k\ge0$ being the radius distribution. Then
\begin{enumerate}
\item $\mathbb P(\mathcal A_F)=\left(1+\sum_{j\ge1}\prod_{k=0}^{j-1}\alpha_k\right)^{-1}$ and,
\item $\mathbb E (M_F)=\left(\prod_{k\ge0}\alpha_k\right)^{-1}$ (with $\frac{1}{0}:=+\infty$).
\end{enumerate}
\end{theorem}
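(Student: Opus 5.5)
The idea is to follow the rightmost informed site. Since every informed site $j$ informs the whole interval $\{j,\ldots,j+R_j\}$, the informed set is always an interval $\{0,1,\ldots,m\}$. More precisely, $\bigcup_{n\ge0}A_n=\{0,\ldots,M_F-1\}$, and $M_F$ is the first index $m\ge1$ at which the reach dies, i.e.
\[
M_F=\min\Big\{m\ge 1:\ \max_{0\le j\le m-1}(j+R_j)<m\Big\}
\]
(with $\min\emptyset=\infty$). I would justify this by a short induction: if the first $m$ sites are informed and some $j<m$ has $j+R_j\ge m$, then site $m$ is informed at the next stage. Conversely, sites beyond $\max_{j<m}(j+R_j)$ can never be reached once $\{0,\ldots,m-1\}$ is exhausted.

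This gives
\[
\mathbb P(M_F> m)=\mathbb P\big(\forall\,\ell\le m:\ \max_{j<\ell}(j+R_j)\ge \ell\big).
\]
For part (2) I would use $\mathbb E(M_F)=\sum_{m\ge0}\mathbb P(M_F>m)$. For part (1) I would use the complementary event $\{M_F=\infty\}$. Rather than working with these events directly, it is cleaner to read sites from right to left. The condition $\max_{j<m}(j+R_j)<m$ says exactly that $R_{m-1-i}\le i$ for $i=0,\ldots,m-1$. By independence,
\[
\mathbb P\Big(\max_{j<m}(j+R_j)<m\Big)=\prod_{i=0}^{m-1}\alpha_i=:\pi_m .
\]
So $\pi_m$ is the probability that $m$ is a ``barrier'', meaning the first $m$ sites cannot jump over it.

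For part (2) the main obstacle is that the barrier events are not disjoint. I would resolve it with a renewal decomposition. Let $\tau=M_F$ be the first barrier and condition on it. If $m$ is a barrier and $\tau=k\le m$, then, since $k$ is a barrier, the condition at $m$ reduces to the sites $k,\ldots,m-1$. Those sites are independent of the event $\{\tau=k\}$, and requiring $R_{m-1-i}\le i$ for $i<m-k$ has probability $\pi_{m-k}$. This gives the renewal equation
\[
\pi_m=\sum_{k=1}^m \mathbb P(\tau=k)\,\pi_{m-k},\qquad \pi_0=1.
\]
In generating functions this reads $\Pi(s)=1+\Phi(s)\Pi(s)$, where $\Pi(s)=\sum_m\pi_ms^m$ and $\Phi(s)=\mathbb E(s^\tau\mathbf 1_{\tau<\infty})$. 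Letting $s\uparrow1$ gives $\mathbb P(\tau<\infty)=1-1/\sum_{m\ge0}\pi_m$, which is part (1): $\mathbb P(\mathcal A_F)=(1+\sum_{j\ge1}\prod_{k<j}\alpha_k)^{-1}$.

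For part (2), assume first that $\pi_\infty:=\prod_k\alpha_k>0$. Then $\mathbb P(\mathcal A_F)=0$, because $\sum\pi_m=\infty$. Moreover, by the renewal theorem $\pi_m\to 1/\mathbb E(\tau)$, while monotonicity gives $\pi_m\downarrow\pi_\infty$. This would give $\mathbb E(M_F)=1/\pi_\infty$.

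I would check carefully the case of period $d>1$, where the plain renewal theorem does not apply. This cannot actually occur: $\pi_1=\alpha_0>0$ when $\pi_\infty>0$, so $\mathbb P(\tau=1)>0$ and $\tau$ is aperiodic. If instead $\pi_\infty=0$, then either $\mathbb P(\tau=\infty)>0$, in which case $\mathbb E(M_F)=\infty$ trivially, or $\tau$ is a.s. finite with $\pi_m\to 1/\mathbb E\tau=0$. The latter forces $\mathbb E(\tau)=\infty$, again by the renewal theorem (or by Abelian arguments in the case $\alpha_0=0$, where $\pi_m\equiv0$ and $\tau=\infty$ a.s.).

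The main obstacle I foresee is making the conditional independence in the renewal step rigorous, since $\{\tau=k\}$ depends only on $R_0,\ldots,R_{k-1}$.
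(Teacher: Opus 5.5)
Your proof is correct, and it reaches item (2) by a route dual to the paper's. The paper proves only item (2). It imports from Lemma~1 of \cite{GGJR2014} the fact that the \emph{tail} sequence $\mathbb P(M_F>n)$ is the renewal sequence of an auxiliary, possibly terminating, renewal process, with inter-arrival law $f_k=\pi_{k-1}-\pi_k=(1-\alpha_{k-1})\prod_{i=0}^{k-2}\alpha_i$ and defect $\prod_k\alpha_k$. Then $\mathbb E(M_F)=\sum_{n\ge0}\mathbb P(M_F>n)$ is the expected number of renewals of a transient process, namely $1/(1-\sum_k f_k)$, and no limit theorem is needed.

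You instead take the barriers themselves as the renewal process: $M_F$ is its first inter-arrival time and $\pi_m=\prod_{i<m}\alpha_i$ is its renewal sequence. The renewal property is immediate. The step you flag as the main obstacle is already settled by your own remark: $\{\tau=k\}\in\sigma(R_0,\dots,R_{k-1})$, while on that event $\{m\text{ is a barrier}\}=\{R_j\le m-1-j,\ k\le j<m\}$. Your route gives a self-contained proof of both items from one structure, including item (1), which the paper only cites. The price is the Erd\H{o}s--Feller--Pollard theorem plus the aperiodicity check, and you handle both correctly: if $\alpha_0=0$ there are no barriers at all, and otherwise $\mathbb P(\tau=1)=\alpha_0>0$.

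You can avoid that price entirely. Since $\sum_{n\ge0}\mathbb P(\tau>n)s^n=(1-\Phi(s))/(1-s)$, your identity $\Pi(s)(1-\Phi(s))=1$ reads $(1-s)\Pi(s)\cdot\sum_{n\ge0}\mathbb P(\tau>n)s^n=1$. Let $s\uparrow1$, using Abel's theorem on the first factor and monotone convergence on the second. This gives $\mathbb E(M_F)=1/\pi_\infty$ in every case (with $1/0=\infty$), with no periodicity discussion or case split.

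The same identity makes the duality explicit: it says exactly that $\mathbb P(M_F>n)$, $n\ge0$, is the renewal sequence with inter-arrival law $\pi_{k-1}-\pi_k$, i.e.\ the paper's $f_k$. It also fixes the paper's indexing. As printed, the product in $f_k$ should stop at $i=k-2$, and the lemma should read $\mathbb P(M_F>n)=u_n$ rather than $\mathbb P(M_F\ge n)=u_n$.
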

Statement (1) was proved by Gallo et al.~\cite{GGJR2014}. As far as we know, Statement (2) is new in the literature of FP. We will give a proof using renewal theory, following similar arguments as in \cite{GGJR2014}. 
\begin{obs}\label{FP_general}
We considered above the one individual per site case as was originally defined in \cite{JMZ2011} and \cite{GGJR2014}. But Theorem \ref{Th:Gallo}  can be used when the process starts with a random number of individual per site. In this case, each individual $i$ at site $n$ is endowed with an independent spreading radius $\bar R_n^{\,i}$, where the family $\{\bar R_n^{\,i} : n\ge 0,\ i\ge 1\}$ consists of i.i.d.\ $\mathbb{N}$-valued random variables with distribution $\bar\alpha_k:=\mathbb{P}(\bar R_0^{\,1}\le k),  k\ge 0$. We define the \emph{effective spreading radius} of site $n$ as $R_n:=\sup_{1\le i\le N_n}\bar R_n^{\,i}$, with the convention that $R_n=0$ when $N_n=0$. Now everything in the model with random number of individuals works as if there were exactly one individual per site with radius $R_n$ and Theorem \ref{Th:Gallo} applies. We will keep the notation $\mathcal A_F$ and $M_F$  even in the case of having a random number of individual at each site.    
\end{obs}

 \subsection{General Immigration Processes with Catastrophes} Suppose catastrophes and immigrations occur as before, but the way catastrophes affect the individual is different:  we associate to each individual, upon birth, an independent number of catastrophes it will survive, having common distribution $\mathfrak r$ on $\mathbb N$. We denote this general model by GIPC($\lambda, \mathfrak r$). Notice in particular that, in this model, a given catastrophe may be unable to affect a given individual if it is not is ``moment to die''. If $\mathfrak r$ is geometric then the GIPC($\lambda,\mathfrak r$) is an IPBC. Depending on $\mathfrak r$, the process $\mathrm{GIPC}(\lambda,\mathfrak r)$ either survives (in the sense of \eqref{def_sobrevivencia}) or becomes extinct.

\subsection{The GIPC seen as a FP}\label{GIPC-FP}
The GIPC admits an alternative description as a spatial process on $\mathbb{N}$ as follows.  
Start the process with a single colony located at vertex $0$ (the root of $\mathbb{N}$), initially containing one individual. The colony grows according to a Poisson process with rate $\lambda$ until the first catastrophe occurs (after an exponential time of rate $1$). 
Right after this moment, the surviving individuals (those haven't still reached their maximal number of catastrophes) simultaneously move to the neighboring site to the right, where they form a new colony and continue the evolution of the process in the same way. 

With this spatial interpretation, we can now parallel the GIPC($\lambda,\mathfrak r$) with the Firework process having $\bar R_n^i\sim \mathfrak r, N_n\sim \mathrm{Geo}\!\left(\frac{1}{1+\lambda}\right)$ (all independent).

Observe that the catastrophe process starts with $N_0 \sim 1+N$, rather than 
$N_0 \sim N$. 
However, by the memoryless property of 
$N_0 \sim \mathrm{Geo}\!\left(\frac{1}{1+\lambda}\right)$, we obtain
\[
\mathbb{P}(\mathcal A_C)
= \mathbb{P}(\mathcal A_F \mid N_0 \ge 1).
\]
Since
\[
\mathbb{P}(\mathcal A_F)
= \mathbb{P}(\mathcal A_F \mid N_0 \ge 1)\,\mathbb{P}(N_0 \ge 1),
\]
it follows that
\begin{equation}\label{Relacion_FP}
\mathbb{P}(\mathcal A_C)
= \frac{\mathbb{P}(\mathcal A_F)}{\mathbb{P}(N_0 \ge 1)}
= \frac{(1+\lambda)\mathbb{P}(\mathcal A_F)}{\lambda}.
\end{equation}
Similarly
\[\mathbb E(M_C)=\mathbb{E}(M_F \mid N_0 \ge 1)\]
so 
\begin{align*}
\mathbb E(M_F)&=\mathbb{E}(M_F \mid N_0 \ge 1)\mathbb P(N_0\ge1)+\mathbb{E}(M_F \mid N_0=0)\mathbb P(N_0=0)\\
&=\mathbb E(M_C)\,\,\mathbb P(N_0\ge1)+\mathbb P(N_0=0)
\end{align*}
and thus
\begin{equation}\label{Relacion_FPExpect}
\mathbb E(M_C)=\frac{1}{\lambda}\left((1+\lambda)\mathbb E(M_F)-1\right).
\end{equation}

\section{Proofs}\label{sec:proofs}

\begin{proof}[Proof of Theorem \ref{Th:Catastrofe_aleatoria}]
\noindent \textit{Part (1).}
The proof relies on Foster's theorem, which we recall below. For a proof and further discussion, see Fayolle \textit{et al.}~\cite[Theorem~2.2.3]{FMM1995}.

\begin{theorem}[Foster's theorem]
Let $\{W_n\}_{n\geq 0}$ be an irreducible and aperiodic Markov chain on a countable state space $\mathcal{S}=\{\alpha_i,\ i\geq0\}$. Then $\{W_n\}_{n\geq 0}$ is ergodic if and only if there exist a positive function $g:\mathcal{S}\to(0,\infty)$, a constant $\varepsilon>0$, and a finite set $A\subset\mathcal{S}$ such that
\[
\mathbb{E}\big[g(W_{n+1})-g(W_n)\mid W_n=\alpha_j\big]\le -\varepsilon,
\qquad \alpha_j\notin A,
\]
and
\[
\mathbb{E}\big[g(W_{n+1})\mid W_n=\alpha_i\big]<\infty,
\qquad \alpha_i\in A.
\]
\end{theorem}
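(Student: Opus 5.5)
The statement just above is Foster's theorem, a classical result that the paper quotes from Fayolle \emph{et al.}\ rather than proves. Here is how I would prove it.

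For sufficiency, the idea is to turn the drift condition into a bound on expected hitting times of $A$, and then pass from ``$A$ is positive recurrent'' to ergodicity. Fix $\alpha_j\notin A$ and let $\tau=\inf\{n\ge0: W_n\in A\}$. Consider the stopped process
\[
Y_n:=g(W_{n\wedge\tau})+\varepsilon\,(n\wedge\tau).
\]
On $\{\tau>n\}$ the drift hypothesis gives $\mathbb E[Y_{n+1}-Y_n\mid\mathcal F_n]\le0$, and on $\{\tau\le n\}$ the increment is zero. So $(Y_n)$ is a nonnegative supermartingale (using $g>0$).

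Taking expectations gives $\varepsilon\,\mathbb E_{\alpha_j}(n\wedge\tau)\le g(\alpha_j)$. Monotone convergence then yields
\[
\mathbb E_{\alpha_j}\tau\le g(\alpha_j)/\varepsilon<\infty .
\]
Next let $\tau_A^+=\inf\{n\ge1:W_n\in A\}$ and start from $\alpha_i\in A$. Conditioning on $W_1$ and applying the previous bound (with $\mathbb E\tau=0$ if $W_1\in A$) gives
\[
\mathbb E_{\alpha_i}\tau_A^+\le 1+\mathbb E_{\alpha_i}[g(W_1)]/\varepsilon .
\]
This is finite by the second hypothesis. Since $A$ is finite, $C:=\max_{\alpha_i\in A}\mathbb E_{\alpha_i}\tau_A^+<\infty$.

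The main obstacle is upgrading this to finite expected return time to a single state $a\in A$. I would use the chain induced on $A$, i.e.\ $W$ observed at its successive visits to $A$. This is a Markov chain on the finite set $A$, and it is irreducible because the original chain is. Hence the number $N$ of $A$-visits needed to return to $a$ has geometric tails, so $\mathbb E_a N<\infty$. The return time $\tau_a$ is a sum of $N$ excursion lengths between consecutive $A$-visits. Each excursion has conditional mean at most $C$ given the past, by the strong Markov property at visits to $A$. Since $\{N\ge k\}$ is determined before the $k$-th excursion, a Wald-type argument gives $\mathbb E_a\tau_a\le C\,\mathbb E_aN<\infty$. Thus the chain is positive recurrent, and an irreducible, aperiodic, positive recurrent chain is ergodic.

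For necessity, assume ergodicity and take $A=\{\alpha_0\}$. Define $g(\alpha)=\mathbb E_\alpha\tau$ for $\alpha\ne\alpha_0$, with $\tau$ the hitting time of $\alpha_0$, and set $g(\alpha_0)=1$ (any positive value will do). Positive recurrence and irreducibility make $g$ finite everywhere. First-step analysis gives, for $\alpha\ne\alpha_0$,
\[
\mathbb E[g(W_{n+1})\mid W_n=\alpha]=g(\alpha)-1+\mathbb P_\alpha(W_1=\alpha_0)\,g(\alpha_0)\le g(\alpha)-\tfrac12
\]
once $g(\alpha_0)\le\tfrac12$. So I would instead set $g(\alpha_0)=\tfrac12$, which gives the drift condition with $\varepsilon=\tfrac12$. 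Finally, $\mathbb E[g(W_1)\mid W_0=\alpha_0]\le\mathbb E_{\alpha_0}\tau_{\alpha_0}<\infty$ by positive recurrence, which is the second condition.
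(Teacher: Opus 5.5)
Your proof is correct. The paper contains no proof of Foster's theorem to compare against: it quotes the result from Fayolle, Malyshev and Menshikov, and uses only the sufficiency direction, to show that the embedded chain $\{Y_n\}$ of $C_{\mathrm{cat}}(\lambda,\nu)$ is ergodic. What you give is the standard self-contained argument. The stopped process $g(W_{n\wedge\tau})+\varepsilon(n\wedge\tau)$ is a nonnegative supermartingale, which yields $\mathbb E_\alpha\tau_A\le g(\alpha)/\varepsilon$. Finite mean return time to the finite set $A$ then becomes positive recurrence of a single state through the induced chain on $A$. For the converse you take $g$ to be the mean hitting time of $\alpha_0$, with $g(\alpha_0)=\tfrac12$.

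A few points are worth tightening. The bound $\mathbb E_\alpha\tau\le g(\alpha)/\varepsilon<\infty$ already forces $A\neq\emptyset$. The induced chain is well defined because $\mathbb E_{\alpha}\tau_A^+<\infty$ for every $\alpha\in A$, together with the strong Markov property, gives infinitely many visits to $A$ almost surely. In the necessity part, set $g(\alpha_0)=\tfrac12$ from the start rather than first trying $1$. Also, the final bound should be stated with the return time, $\mathbb E_{\alpha_0}[g(W_1)]\le \mathbb E_{\alpha_0}\tau^+_{\alpha_0}<\infty$, since your hitting time $\tau$ of $\alpha_0$ is zero when the chain starts at $\alpha_0$.
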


Let $\{Y_n\}_{n\geq 0}$ be the discrete-time Markov chain embedded in the process $C(\lambda,\nu)$, with transition probabilities
\[
P_{i,i+1}=\frac{\lambda}{\lambda+1},
\qquad
P_{i,j}=\frac{1}{\lambda+1}\binom{i}{j}
\int_0^1 p^j(1-p)^{i-j}\nu(dp),
\quad 0\le j\le i,
\]

Ergodicity of $\{Y_n\}$ implies that the extinction time of $C(\lambda,\nu)$ has finite mean. Observe that $\{Y_n\}$ is irreducible and aperiodic. We apply Foster's theorem to show that $\{Y_n\}$ is ergodic for any probability distribution $\nu$ on $[0,1]$.

Consider the function $g:\mathbb{N}\to(0,\infty)$ defined by $g(i)=i+1$, and fix $\varepsilon>0$. Define the set
\[
A:=\left\{ i\in\mathbb{N} :
\frac{1}{\lambda+1}\big(\lambda+i(\mu_p-1)\big)>-\varepsilon \right\},
\]
where $\mu_p=\int_0^1 p\,\nu(dp)$.
Since $\mu_p<1$, the set $A$ is finite.

Since $g(j)-g(i)=j-i$, we compute
\[
\mathbb{E}[g(Y_{n+1})-g(Y_n)\mid Y_n=i]
=
\sum_{j=0}^{i+1}(j-i)P_{i,j} 
=
\frac{1}{\lambda+1}\big(\lambda+i(\mu_p-1)\big).
\]

Thus,
\[
\mathbb{E}[g(Y_{n+1})-g(Y_n)\mid Y_n=i]
\le -\varepsilon,
\qquad i\notin A.
\]

Moreover,
\[
\mathbb{E}[g(Y_{n+1})\mid Y_n=i]
=
\sum_{j=0}^{i+1} g(j)P_{i,j}
=
\frac{1}{\lambda+1}
\big(\lambda(i+2)+i\mu_p+1\big)
<\infty,
\qquad i\in A.
\]

Therefore, all the conditions of Foster's theorem are satisfied, and $\{Y_n\}$ is ergodic. This concludes the proof.

\noindent
\textit{Part (2).} Let $\tau_i = \mathbb{E}[M_{C_{\mathrm{cat}}} \mid X(0)=i]$ be the expected number of catastrophes until extinction starting from $i$ individuals. Clearly, $\tau_0 = 0$. By conditioning on the first event (either an immigration or a catastrophe), the sequence $\{\tau_i\}_{i\ge 1}$ satisfies the backward Kolmogorov equations:
\[
(\lambda + 1)\tau_i = 1 + \lambda \tau_{i+1} + \int_0^1 \sum_{j=0}^i \binom{i}{j} p^j (1-p)^{i-j} \tau_j \,\nu(dp), \quad i \ge 1.
\]
Rearranging terms, we obtain
\begin{equation}\label{eq:kolmogorov_tau}
\lambda (\tau_{i+1} - \tau_i) - \tau_i + \int_0^1 \sum_{j=0}^i \binom{i}{j} p^j (1-p)^{i-j} \tau_j \,\nu(dp) = -1.
\end{equation}
To solve this infinite system, we introduce the Poisson transform of the sequence $\{\tau_i\}$, defined by
\[
F(z) = \sum_{i=0}^\infty \tau_i e^{-z} \frac{z^i}{i!}.
\]
\begin{obs}\label{Rem:ConvergenceF}
The convergence of the Poisson transform $F(z)$ for any $z > 0$ is guaranteed by the structure of the backward Kolmogorov equations. Since the expected times are non-negative ($\tau_j \ge 0$), we can drop the integral and the negative constant from \eqref{eq:kolmogorov_tau} to obtain the strict upper bound
\[
\lambda \tau_{i+1} < (\lambda + 1)\tau_i, \quad i \ge 1.
\]
Iterating this geometric recurrence yields
\[
\tau_i < \tau_1 \left( \frac{\lambda + 1}{\lambda} \right)^{i-1} = K C^i,
\]
where $C = \frac{\lambda + 1}{\lambda}$ and $K = \frac{\tau_1}{C}$. Substituting this exponential bound into the definition of $F(z)$ yields
\[
F(z) = \sum_{i=1}^\infty \tau_i e^{-z} \frac{z^i}{i!} < K e^{-z} \sum_{i=1}^\infty \frac{(Cz)^i}{i!} = K e^{-z} (e^{Cz} - 1).
\]
Since $C$ and $K$ are finite constants, $F(z)$ is strictly bounded by $K e^{z(C-1)}$, proving that the series converges absolutely. 
\end{obs}
Note that $F(0) = \tau_0 = 0$ and $F'(0) = \tau_1$, which is the quantity we wish to compute since the process starts with a single individual ($X(0)=1$). Multiplying \eqref{eq:kolmogorov_tau} by $e^{-z} \frac{z^i}{i!}$ and summing over $i \ge 1$, the binomial thinning property yields the differential-functional equation
\[
\lambda F'(z) - F(z) + \int_0^1 F(pz) \,\nu(dp) = e^{-z} - 1 + \lambda \tau_1 e^{-z}.
\]
Since $F(0)=0$, we can express $F(z)$ as a power series $F(z) = \sum_{k=1}^\infty c_k z^k$. Substituting this series into the equation and matching the coefficients of $z^k$ gives
\[
\lambda (k+1) c_{k+1} - c_k + \mathbb{E}[X^k] c_k = (1 + \lambda \tau_1) \frac{(-1)^k}{k!}, \quad k \ge 1.
\]
This is a first-order linear recurrence relation for the coefficients $c_k$. Let $v_k = (-1)^{k-1} k! c_k$, which implies $v_1 = c_1 = \tau_1$. The recurrence simplifies to
\begin{equation}\label{eq:recurrence_v}
v_{k+1} = - \frac{1 - \mathbb{E}[X^k]}{\lambda} v_k + \frac{1 + \lambda \tau_1}{\lambda}.
\end{equation}
Iterating \eqref{eq:recurrence_v} yields the general solution
\begin{eqnarray}\label{eq:factored_v}
v_{k+1} &=& (-1)^k v_1 \prod_{j=1}^k \frac{1-\mathbb{E}[X^j]}{\lambda} + \frac{1+\lambda \tau_1}{\lambda} \sum_{n=1}^k (-1)^{k-n} \prod_{j=n+1}^k \frac{1-\mathbb{E}[X^j]}{\lambda} \nonumber\\
&=& \left( (-1)^k \prod_{j=1}^k \frac{1-\mathbb{E}[X^j]}{\lambda} \right) \left[ v_1 + \frac{1+\lambda \tau_1}{\lambda} \sum_{n=1}^k \frac{(-\lambda)^n}{\prod_{j=1}^n (1-\mathbb{E}[X^j])} \right].
\end{eqnarray}
To mathematically select the unique physical solution from this recurrence, we recall that the Poisson transform $F(z) = \sum_{k=1}^\infty c_k z^k$ relates to the original sequence via $e^z F(z) = \sum_{i=1}^\infty \tau_i \frac{z^i}{i!}$. Expanding $e^z$ as a power series and matching the coefficients of $z^i$ yields the exact algebraic recovery formula
$$\tau_i = \sum_{k=1}^i \binom{i}{k} k! c_k = \sum_{k=1}^i \binom{i}{k} (-1)^{k-1} v_k.$$
Substituting \eqref{eq:factored_v} into this identity, we obtain
$$\tau_i = \sum_{k=1}^i \binom{i}{k} \left( \prod_{j=1}^{k-1} \frac{1-\mathbb{E}[X^j]}{\lambda} \right) S_{k-1},$$
where $S_{k-1}$ denotes the bracketed term in \eqref{eq:factored_v} evaluated at step $k-1$. Assuming for the moment that $\lambda < 1$ (which guarantees that the infinite series within the bracket converges), the asymptotic growth and sign of $\tau_i$ are entirely dictated by the limit of the bracket, $S_\infty = \lim_{k\to\infty} S_k$. 
If $S_\infty < 0$, the negative terms will eventually dominate the sum, forcing $\tau_i < 0$ for large $i$, which is impossible for an expected time. Hence, any probabilistically valid solution strictly requires $S_\infty \ge 0$. By the general theory of Markov chains (see, e.g., Norris \cite[Theorem~1.3.5]{Norris1997}), the expected hitting time to the absorbing state is uniquely identified as the \emph{minimal non-negative solution} to the associated system of linear equations. Since $S_k$ is strictly increasing with respect to the initial condition $\tau_1$, obtaining the minimal solution dictates setting the limit to its lowest valid bound, $S_\infty = 0$. This rigorous minimality argument enforces the exact boundary condition:
\[
v_1 + \frac{1+\lambda \tau_1}{\lambda} \sum_{n=1}^\infty \frac{(-\lambda)^n}{\prod_{j=1}^n (1-\mathbb{E}[X^j])} = 0.
\]
Recalling that $v_1 = \tau_1$ and substituting the definition of $\mathcal{S}_\nu(\lambda)$, we obtain
\[
\tau_1 + \frac{1+\lambda \tau_1}{\lambda} \mathcal{S}_\nu(\lambda) = 0.
\]
Solving for $\tau_1$ yields the explicit formula for $\lambda < 1$:
\[
\mathbb E(M_{C_{\mathrm{cat}}}) = \tau_1 = -\frac{1}{\lambda} \frac{\mathcal{S}_\nu(\lambda)}{1 + \mathcal{S}_\nu(\lambda)}.
\]
This restriction on $\lambda$ is then lifted via analytic continuation, extending the validity of the formula to all $\lambda > 0$ (see Remark \ref{Rem:AnalyticContinuation}).
\end{proof}

\begin{obs}\label{Rem:AnalyticContinuation}
Regarding the analytical properties of $\mathcal{S}_\nu(\lambda)$, two technical points are in order. First, since $\nu$ is not concentrated at $1$, the dominated convergence theorem implies $\lim_{n \to \infty} \mathbb{E}[X^{n+1}] = 0$. By the ratio test, the series \eqref{Eq:aux} has a radius of convergence equal to $1$, meaning it diverges for $\lambda \ge 1$. 
Second, to justify the extension to $\lambda \ge 1$, we note that $\mathbb E(M_{C_{\mathrm{cat}}})$ is an analytic function of $\lambda$ on $(0,\infty)$. Recall the backward Kolmogorov system \eqref{eq:kolmogorov_tau} for the vector of expected times $\tau = (\tau_1, \tau_2, \dots)^T$. This infinite system can be recast in operator form as $(A + \lambda B)\tau = -\mathbf{1}$, where $\mathbf{1}$ is the all-ones vector, and the linear operators $A$ and $B$ are strictly independent of $\lambda$. Specifically, their components are given by:
\[
(A\tau)_i = \int_0^1 \sum_{j=1}^i \binom{i}{j} p^j (1-p)^{i-j} \tau_j \,\nu(dp) - \tau_i, \quad \text{and} \quad (B\tau)_i = \tau_{i+1} - \tau_i.
\]
By classical perturbation theory \cite{Kato1995}, the expected time vector $\tau = -(A+\lambda B)^{-1}\mathbf{1}$ is an analytic function of $\lambda$ wherever the inverse is a bounded operator. Since the Foster criterion (Theorem \ref{Th:Catastrofe_aleatoria}, Part 1) guarantees a finite expected time for all $\lambda > 0$, this inverse is well-defined on $(0,\infty)$. Thus, the algebraic relation established for $\lambda \in (0,1)$ extends uniquely to all $\lambda > 0$ via analytic continuation.  The practical realization of this extension is explicitly illustrated in Example \ref{Ex:PowerFunction}, where the derived closed-form expressions remain perfectly well-behaved for any $\lambda > 0$.
\end{obs}

\begin{proof}[Proof of Theorem \ref{Th:Particula_aleatoria}]
For the proof, we make use of the bridge with the Firework process (Section~\ref{GIPC-FP}).
Let $N\sim \mathrm{Geo}(1/(1+\lambda))$ and let $(N_i)_{i\ge0}$ be an i.i.d.\ collection of random variables with $N_i\sim N$, denoting the number of particles at site $i$ when the firework process starts. Assume that each particle has spreading radius distribution $\bar{\alpha}_i=\mathbb{P}(\bar{R}\le i)$, $i\ge0$. Using Theorem~\ref{Th:Gallo} and Remark~\ref{FP_general}, a simple calculation shows that for this Firework process 
\[
\mathbb P(\mathcal A_F)
=
\left(1+\sum_{j\ge1}\prod_{k=0}^{j-1}\frac{1}{1+\lambda(1-\bar{\alpha}_k)}\right)^{-1}\,\,\text{and}\,\,\,\mathbb E (M_F)=\prod_{k\ge0}\frac{1}{1+\lambda(1-\bar{\alpha}_k)}
\]
In the model $\Cind(\lambda,\nu)$, the number of catastrophes survived by a particle $u$ born at site $i\geq 0$ follows a geometric distribution with parameter $X_{i,u}\sim \nu$, where the family $(X_{i,u})_{i\ge 0,\,u\ge 1}$ is independent. It follows that
\[
1-\bar{\alpha}_k
=
\int_0^1 x^{k+1}\nu(dx)
=
\mathbb E(X^{k+1}).
\]
The expressions of $\mathbb P(\mathcal A_{\Cind})$ and $\mathbb E(M_{\Cind})$ now follow from relations~\eqref{Relacion_FP} and \eqref{Relacion_FPExpect}.
\end{proof}

\begin{proof}[Proof of Corollary~\ref{cor:criteria}]
\noindent \textit{Part (1).}
We begin with the statement concerning the expected number of catastrophes. From Theorem \ref{Th:Particula_aleatoria}, the expectation is given by
\[
\mathbb E(M_{\Cind}) = \frac{1}{\lambda}\left((1+\lambda)\prod_{k\ge0}\left(1+\lambda\,\mathbb E(X^{k+1})\right)-1\right).
\]
Since $1+\lambda\,\mathbb E(X^{k+1}) \ge 1$ for all $k$, we can use the standard property of infinite products (see, e.g., \cite[Chapter VII]{Knopp1990}) which states that for any sequence $c_k \ge 0$, $\prod_{k\ge0} (1+c_k) = \infty$ if and only if $\sum_{k\ge0} c_k = \infty$.
Setting $c_k = \lambda\,\mathbb E(X^{k+1})$ immediately yields that $\mathbb E(M_{\Cind}) = \infty$ if and only if $\sum_{n\ge0} \mathbb E(X^{n+1}) = \infty$.

Next, we address the survival probability. By Theorem \ref{Th:Particula_aleatoria}, survival occurs with positive probability if and only if the infinite series $\sum_{j\ge1}\prod_{k=0}^{j-1}\frac{1}{1+\lambda\mathbb E(X^{k+1})}$  converges. Let
\[
a_j := \prod_{k=0}^{j-1} \frac{1}{1+\lambda\,\mathbb E(X^{k+1})}, \quad j\ge 1.
\]
Survival is equivalent to the convergence of $\sum_{j\ge1} a_j$. Observe that
\[
j\left(\frac{a_j}{a_{j+1}}-1\right) = j\lambda\,\mathbb E(X^{j+1}).
\]
According to Raabe's test (see, e.g., \cite{Knopp1990}), if the limit inferior of this sequence is strictly greater than $1$, the series converges (hence $\mathbb P(\mathcal A_{\Cind})>0$). Conversely, if the limit superior is strictly less than $1$, the series diverges (hence $\mathbb P(\mathcal A_{\Cind})=0$). This concludes the proof of the first part.

\vspace{0.2cm}
\noindent \textit{Part (2).}
Suppose now that $\nu$ has a bounded density $f$ on $[0,1]$ and that the left limit $f(1^-):=\lim_{x \uparrow 1} f(x)$ exists. Then
\[
n\mathbb{E}[X^{n+1}] = n\int_0^1 x^{n+1}f(x)\,dx.
\]
Performing the change of variables $x=1-\frac{t}{n}$, we obtain
\[
n\mathbb{E}[X^{n+1}] = \int_0^\infty \mathbf 1_{\{t\le n\}} \left(1-\frac{t}{n}\right)^{n+1} f\!\left(1-\frac{t}{n}\right)\,dt.
\]
For each fixed $t\ge0$, we have $1-\frac{t}{n} \uparrow 1$ as $n\to\infty$. Hence
\[
\mathbf 1_{\{t\le n\}} \to 1, \qquad \left(1-\frac{t}{n}\right)^{n+1} \to e^{-t}, \qquad f\!\left(1-\frac{t}{n}\right) \to f(1^-),
\]
and therefore the integrand converges pointwise to $f(1^-)e^{-t}$.

Since $f$ is bounded, there exists $K>0$ such that $|f(x)|\le K$ for all $x\in[0,1]$. Moreover,
\[
0 \le \mathbf 1_{\{t\le n\}} \left(1-\frac{t}{n}\right)^{n+1} \le e^{-t} \quad \text{for all } t\ge0.
\]
Hence, the integrand is bounded in absolute value by $K e^{-t}$, which is integrable on $[0,\infty)$. The dominated convergence theorem therefore yields
\[
\lim_{n\to\infty} n\mathbb{E}[X^{n+1}] = \int_0^\infty f(1^-)e^{-t}\,dt = f(1^-),
\]
which, combined with part (1), proves the density criteria.

For the final statement, because $f$ is bounded and left-differentiable at $1$, there exists a constant $C>0$ such that $|f(x)-f(1^-)|\le C(1-x)$ for any $x\in[0,1]$. It follows that
\begin{align*}
\left|\mathbb{E}[X^{n}]-\frac{f(1^-)}{n+1}\right| &= \left|\int_{0}^{1} x^n f(x) \, dx - \int_0^1 x^n f(1^-)\,dx \right| \\
&\le \int_{0}^{1} x^n |f(x)-f(1^-)| \,dx \\
&\le C\int_{0}^{1} x^n (1-x) \,dx = \frac{C}{(n+1)(n+2)}.
\end{align*}
We now have
\[
\frac{a_j}{a_{j+1}} = 1+\lambda \mathbb E(X^{j+1}) = 1+\frac{\lambda f(1^-)}{j} + \mathcal{O}\left(\frac{1}{j^2}\right).
\]
In the critical case $f(1^-)=\frac{1}{\lambda}$, this becomes $\frac{a_j}{a_{j+1}} = 1+\frac{1}{j} + \mathcal{O}\left(\frac{1}{j^2}\right)$. Using Gauss's test (see, e.g., \cite{Knopp1990}), we conclude that the series $\sum_{j\ge1} a_j$ diverges, meaning $\mathbb P(\mathcal A_{\Cind})=0$.
\end{proof}

We conclude with the proof of Theorem \ref{Th:Gallo} item (2) since item (1) was already proved in \cite{GGJR2014}. 

\begin{proof}[Theorem \ref{Th:Gallo} item (2)]
Lemma 1 of \cite{GGJR2014} states that $\mathbb P(M_F\ge n)=u_n$, where $u_n,n\ge1,$ is the probability that a discrete undelayed renewal sequence renews at time $n$ (see Section 3 in \cite{GGJR2014}). This renewal sequence has inter-arrival distribution given by $f_k=(1-\alpha_{k-1})\prod_{i=0}^{k-1}\alpha_i,k\ge1$.

On the other hand, observe that
\[
\mathbb E(M_F)=\sum_{n\ge1}\mathbb P(M_F\ge n)=\sum_{n\ge1}u_n.
\]

It is well-known from renewal theory that transience, which corresponds to $f_\infty:=1-\sum_{k\ge1}f_k>0$ is equivalent to $\sum_{n\ge1}u_n<\infty$. In the present case, a simple calculation shows that $f_\infty=\prod_{k\ge0}\alpha_k$. So we can already conclude one part of the statement of Theorem \ref{Th:Gallo} item (2), which is that 
\[
\mathbb E(M_F)=\infty \quad\text{if and only if}\quad\prod_{k\ge0}\alpha_k=0.
\]
It remains to calculate $\mathbb E(M_F)$ under recurrence. This follows also directly from renewal theory (see for instance \cite[Theorem 2 in XIII.3]{Feller1968})
\[
\sum_{n\ge1}u_n=\frac{1}{1-\sum_{n\ge1}f_n}=\frac{1}{\prod_{k\ge0}\alpha_k}.
\]
\end{proof}

\end{document}